\documentclass[a4paper,12pt]{elsarticle}
\usepackage[utf8]{inputenc}
\usepackage{amssymb}
\usepackage{amsfonts}
\usepackage{amsmath,amsthm}

\usepackage{enumerate}

\newtheorem{theorem}{Theorem}

\newtheorem{conjecture}[theorem]{Conjecture}

\newtheorem{corollary}[theorem]{Corollary}
\newtheorem{lemma}[theorem]{Lemma}

\newcommand{\probP}{{\mathbb{P}}}

\begin{document}

\title{Distinguishing adjacent vertices by ordering edges}

\author[AGH]{Aleksandra Gorzkowska} 
\ead{agorzkow@agh.edu.pl}

\author[AGH]{Jakub Kwa\'sny\corref{cor1}} 
\ead{jkwasny@agh.edu.pl}

\address[AGH]{AGH University of Krakow, al. A. Mickiewicza 30, 30-059 Krakow, Poland}
\cortext[cor1]{Corresponding author}

\begin{abstract}
    The 1-2-3 Conjecture states that for every graph without isolated edges, there exists an edge-weighting from $\{1,2,3\}$ such that adjacent vertices receive distinct sums of weights on their incident edges. In the sequence variant, adjacent vertices are to be distinguished by the sequences of weights on their  incident edges. In this paper, we investigate whether, for every graph without isolated edges and for a fixed proper edge colouring (where colours are interpreted as weights), there exists a global total order of the edges such that the resulting sequences of incident weights distinguish adjacent vertices. For connected graphs, we prove that such an order exists whenever there exist two adjacent vertices that have distinct sets of incident weights. This yields a positive answer for every proper edge colouring of a connected non-regular graph or a connected graph of class two. Moreover, a probabilistic argument gives the same conclusion for connected regular graphs of degree at least six. 
\end{abstract}

%\noindent\emph{AMS Subject Classification:} 05C15, 05C20, 05C25, 05C05

%\begin{keyword}
%distinguishing index  \sep
%automorphism group \sep
%symmetry breaking \sep
%graph coloring \sep
%oriented graph
%\end{keyword}

\maketitle

\section{Introduction}

This paper concerns a sequence variant of the 1-2-3 Conjecture, an influential and widely studied problem in graph weighting that remained open for almost twenty years. Introduced in 2004 by Karoński, Łuczak and Thomason \cite{KaronskiLuczakThomason}, the conjecture states that every connected graph distinct from $K_2$ admits an edge weighting with weights in $\{1,2,3\}$ such that adjacent vertices receive distinct sums of incident weights. It was proved by Keusch \cite{Keusch}. Alongside efforts to resolve the conjecture, a rich collection of related problems emerged, varying either the weights used or the rule that assigns vertex colours from incident edge weights. These include list, set and sequence variants; see Seamone \cite{SeamoneSurvey} for a survey of their early development.

The sequence variant was proposed by Seamone and Stevens \cite{SeamoneStevens}. Let $G$ be a connected graph distinct from $K_2$, and let $\prec$ be a total order on $E(G)$. Given an edge weighting $w: E(G) \rightarrow \mathcal C$, we define the \emph{vertex colouring induced by $w$ and $\prec$} by
\[
    f_{\prec}(v)=\bigl(w(e_1),\dots,w(e_{d(v)})\bigr),
\]
where $e_1\prec\dots\prec e_{d(v)}$ are all the edges incident with $v$. When the order is clear from the context, we write $f(v)$ instead of $f_{\prec}(v)$. Seamone and Stevens studied the choice of edge weights for a fixed total order, with the aim of making the induced vertex colouring proper.

Unlike sums, sequences depend on the order of the incident edges. Thus, adjacent vertices with the same multiset of incident weights may still receive different sequences. Sequence variants have also been studied in the setting of irregularity strength; see, for example, the work on hypercubes in \cite{Hypercubes}.

In this paper, we reverse the roles of the weights and the order. We fix a proper edge colouring $w$ and ask whether there is a total order $\prec$ on $E(G)$ for which $f_{\prec}$ is a proper vertex colouring. We call $w$ \emph{sequentially orderable} if such an order exists.

The restriction to proper edge colourings gives the edge order the greatest possible local freedom. An arbitrary weighting may offer no such freedom: if two adjacent vertices have the same degree and all edges incident with either vertex have weight $a$, both vertices receive the same constant sequence under every order. In contrast, pairwise distinct incident weights at a vertex of degree $d$ give $d!$ distinct sequences as its incident edges are reordered, the maximum possible number. The challenge is to choose a single global edge order that distinguishes all adjacent vertices simultaneously.

Our main structural result shows that a proper edge colouring of a connected graph is sequentially orderable whenever there exist two adjacent vertices that have distinct sets of incident weights. In particular, every proper edge colouring of a connected non-regular graph or a connected graph of class two is sequentially orderable. A probabilistic argument establishes the same conclusion for connected regular graphs of degree at least six.

Trivially, the colouring of $K_2$ and proper 2-edge-colourings of even cycles are not sequentially orderable. Our results leave open only proper $d$-edge-colourings of 2-connected $d$-regular graphs with $d\in\{3,4,5\}$. We conjecture that every proper edge colouring of a connected graph other than $K_2$ or an even cycle is sequentially orderable.

\section{Preliminaries}

For graph theory notation and terminology, we follow \cite{Diestel}. Graphs are finite and simple unless stated otherwise. A graph $G$ is of \emph{class one} if $\chi'(G)=\Delta(G)$ and of \emph{class two} if $\chi'(G)=\Delta(G)+1$.

The following lemma gives a greedy construction that distinguishes all adjacent pairs not involving a chosen vertex. In this proof, we use an operation of \emph{inserting} a new edge into an order, which means choosing its position among the already ordered edges while preserving their relative order.

\begin{lemma} Let $G$ be a connected loopless multigraph with a proper edge colouring $w$, and let $v\in V(G)$. There is a total order $\prec$ on $E(G)$ such that $f_{\prec}(x)\neq f_{\prec}(y)$ for every edge $xy\in E(G-v)$. \label{lem: single_bad_vertex}
\end{lemma}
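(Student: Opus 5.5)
The plan is a greedy construction along a spanning tree rooted at $v$. It uses two facts. First, since $w$ is proper, the incident weights at each vertex are pairwise distinct. Second, once all edges at a vertex have been placed, its sequence $f(x)$ never changes, because later insertions preserve the relative order of edges already placed.

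\textbf{Setup.} Since $G$ is connected, I fix a BFS tree $T$ rooted at $v$. I list the vertices as $v_1,\dots,v_{n-1},v_n=v$ in order of non-increasing distance from $v$. Then every $v_i$ with $i<n$ has its $T$-parent $p(v_i)$ among $v_{i+1},\dots,v_n$. I build the order $\prec$ by processing $v_1,\dots,v_{n-1}$ in turn, maintaining this invariant: after step $i$, the placed edges are exactly those incident with some of $v_1,\dots,v_i$. In particular, when $x=v_i$ is processed, every edge joining $x$ to an earlier vertex is already placed, and every earlier vertex $y$ already has a complete, frozen sequence $f(y)$.

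\textbf{Step for $x=v_i$.} Let $e_x$ be one edge joining $x$ to $p(x)$; this is the parent edge. I first insert every other unplaced edge at $x$ (all of these go to later vertices) anywhere, for instance at the end of the current order. At this point every edge at $x$ except $e_x$ is placed, say $d(x)-1$ edges with pairwise distinct weights. Inserting $e_x$ into the global order, I can make it occupy any of the $d(x)$ positions within the sequence of $x$'s edges. This only requires choosing a gap of the global order between the appropriate consecutive edges at $x$; the position of $e_x$ relative to the edges at $p(x)$ is irrelevant because $p(x)$ is processed later or equals $v$. Since $w(e_x)$ differs from all other weights at $x$, these $d(x)$ choices give $d(x)$ pairwise distinct sequences $f(x)$.

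\textbf{Counting forbidden choices.} The neighbours $y$ of $x$ that come earlier in the list are joined to $x$ by edges other than $e_x$. So, counted with multiplicity for parallel edges, there are at most $d(x)-1$ of them. Each such $y$ has a frozen $f(y)$, so it forbids at most one of the $d(x)$ candidate sequences. Hence some position of $e_x$ makes $f(x)\neq f(y)$ for all earlier neighbours $y$, and I choose that position. After step $n-1$, I place the remaining edges, which must be incident only with $v$ (for instance loops are excluded, so there are none left), in any order; in fact all edges are already placed.

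\textbf{Conclusion and main obstacle.} For any edge $xy$ of $G-v$, the vertex among $x,y$ that is processed later guarantees the difference at its own step, and neither sequence changes afterwards. This gives the lemma. The only delicate point is the counting: $e_x$ must go to a strictly later vertex so that it does not count among the constraints. Choosing the parent edge from the BFS tree rooted at $v$ ensures this, and it is exactly why $v$ itself, which has no parent, cannot be handled.
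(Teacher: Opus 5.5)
Your proof is correct and follows the paper's route: a spanning tree rooted at $v$, processing each non-root vertex before its parent, inserting all but one unplaced edge at that vertex, and choosing among $d(x)$ positions for the last edge to avoid the at most $d(x)-1$ frozen sequences of already-processed neighbours. The differences are cosmetic: you fix a BFS tree and always hold back the parent edge, whereas the paper allows any child-before-parent ordering and any remaining unordered edge.
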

\begin{proof}
    Choose a spanning tree of $G$ rooted at $v$. Take an ordering $v_1,\dots,v_n=v$ in which every non-root vertex precedes its parent. In particular, each $v_i$ with $i<n$ has a neighbour with larger index. We process $v_1,\dots,v_{n-1}$ in this order, inserting their incident edges into a growing total order.

    Suppose that $v_1,\dots,v_{i-1}$ have been processed and all their incident edges have been ordered. At least one edge at $v_i$ is still unordered, since its parent has not yet been processed. Insert all but one of the unordered incident edges arbitrarily, leaving an edge $e$. There are exactly $d(v_i)$ positions for $e$ relative to the other incident edges. These yield pairwise distinct sequences $f(v_i)$ because $w$ is proper. On the other hand, at most $d(v_i)-1$ neighbours of $v_i$ have been processed, so we can choose a position avoiding all their sequences. Since all edges at a processed vertex are already ordered, later insertions leave its sequence unchanged.
\end{proof}

The proof of Lemma \ref{lem: single_bad_vertex} uses the property that $w$ is a proper colouring only at the vertices processed before $v$. Thus, the lemma also applies when this condition fails at $v$, provided it holds at every other vertex. 

\begin{corollary} Let $G$ be a connected graph with a vertex $v$ such that $d(v)\neq d(u)$ for every $u\in N(v)$. Then every proper edge colouring of $G$ is sequentially orderable. \label{cor: unique_degree}
\end{corollary}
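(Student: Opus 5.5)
We apply Lemma \ref{lem: single_bad_vertex} with the distinguished vertex chosen to be the vertex $v$ from the statement. Let $w$ be a proper edge colouring of $G$. Since $G$ is connected and simple, it is in particular a connected loopless multigraph, so the lemma gives a total order $\prec$ on $E(G)$ with $f_{\prec}(x)\neq f_{\prec}(y)$ for every edge $xy\in E(G-v)$.

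It then remains to handle the edges incident with $v$, that is, the pairs $vu$ with $u\in N(v)$. Here we use the degree hypothesis. The sequence $f_{\prec}(v)$ has length $d(v)$ and $f_{\prec}(u)$ has length $d(u)$. Since $d(u)\neq d(v)$ by assumption, these two sequences have different lengths, so they differ regardless of $\prec$. Every edge of $G$ either lies in $G-v$ or is incident with $v$, so $f_{\prec}$ is a proper vertex colouring and $w$ is sequentially orderable.

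Note that no choice of order at $v$ is needed, and the edges at $v$ may be placed arbitrarily. There is essentially no obstacle here. The only point to check is that the lemma really treats $v$ as an exceptional vertex whose adjacencies are left unconstrained. The case $G=K_2$ cannot occur, since both endpoints of $K_2$ have degree $1$ and so the hypothesis fails.
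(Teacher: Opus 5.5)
Your proof is correct and matches the paper's argument: you apply Lemma~\ref{lem: single_bad_vertex} with exceptional vertex $v$, and the sequences at $v$ and at each neighbour then differ in length. Your closing remark about $K_2$ is unnecessary but harmless.
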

\begin{proof}
    Apply Lemma \ref{lem: single_bad_vertex} with exceptional vertex $v$. The sequence at $v$ differs in length from that of its every neighbour, so the induced vertex colouring by sequences is proper.
\end{proof}

We next use Lemma \ref{lem: single_bad_vertex} to prove results for graphs with a cut-edge or a cut-vertex. We order the components separately and then merge their orders.

\begin{lemma} Let $G\neq K_2$ be a connected graph with a cut-edge. Then every proper edge colouring of $G$ is sequentially orderable. \label{lem: cut-edge}
\end{lemma}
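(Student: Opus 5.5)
The plan is to split $G$ along the cut-edge, order the two sides with Lemma~\ref{lem: single_bad_vertex}, and then merge the two orders, using the freedom in where the cut-edge is placed to fix the three kinds of conflicts that remain.

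Let $uv$ be a cut-edge, and let $G_u$ and $G_v$ be the components of $G-uv$ containing $u$ and $v$, respectively. Apply Lemma~\ref{lem: single_bad_vertex} to $G_u$ with exceptional vertex $u$ and to $G_v$ with exceptional vertex $v$; a trivial component is handled vacuously. This gives orders $\prec_u$ on $E(G_u)$ and $\prec_v$ on $E(G_v)$ in which every edge of $G_u-u$ and every edge of $G_v-v$ already has distinct end-sequences. Since $E(G_u)$ and $E(G_v)$ are disjoint, any interleaving of $\prec_u$ and $\prec_v$ preserves all sequences at vertices other than $u$ and $v$. Inserting $uv$ changes only $f(u)$ and $f(v)$. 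Moreover, the position of $uv$ among the $d(u)-1$ other edges at $u$ and its position among the $d(v)-1$ other edges at $v$ can be chosen independently: put $uv$ into $\prec_u$ at the desired place, and then merge with $\prec_v$ so that $uv$ lands at the desired place relative to the edges at $v$.

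The remaining constraints are three: $f(u)$ must differ from the sequences of its $d(u)-1$ neighbours in $G_u$, $f(v)$ must differ from its $d(v)-1$ neighbours in $G_v$, and $f(u)\neq f(v)$. Because $w$ is proper, the $d(u)$ possible positions of $uv$ at $u$ give pairwise distinct sequences. Each neighbour of $u$ in $G_u$ excludes at most one of them, so at least one good position $i$ exists at $u$, and likewise a good position $j$ exists at $v$.

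\begin{enumerate}[(i)]
\item If $d(u)\neq d(v)$, the lengths of $f(u)$ and $f(v)$ differ and we are done. This covers the case where one endpoint is a leaf, because $G\neq K_2$.
\item If $d(u)=d(v)=d\ge 2$, then the entry $w(uv)$ appears in $f(u)$ only at position $i$ and in $f(v)$ only at position $j$, since $w$ is proper. So $i\neq j$ already forces $f(u)\neq f(v)$.
\item It therefore suffices to find good positions $i\neq j$. This is immediate if $u$ or $v$ has at least two good positions.
\end{enumerate}

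The main obstacle is the case where both $u$ and $v$ have a unique good position and these coincide. My first remedy is to reverse the whole order $\prec_u$. Reversal reverses every sequence in $G_u$, so it keeps all pairs in $G_u-u$ distinguished. It also sends the bad positions at $u$ to their mirror images, so the unique good position becomes $d+1-i$. This fixes everything unless $d$ is odd and $i=j=(d+1)/2$.

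For that residual case I would exploit the freedom in the proof of Lemma~\ref{lem: single_bad_vertex} itself. Uniqueness of the good position means all $d-1$ neighbours of $u$ in $G_u$ have degree $d$ and realise exactly the $d-1$ bad insertions. I would then re-run the greedy step at one such neighbour $x$, which is processed last in $G_u$ apart from $u$, choosing a different admissible position there. This should change the set of sequences blocking $u$ and free a second good position. Failing that, the fallback is to apply Lemma~\ref{lem: single_bad_vertex} to $G$ itself with exceptional vertex $u$ and repair only the conflicts at $u$ via the same position count.
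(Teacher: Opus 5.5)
Everything up to the residual case is correct, and your setup is essentially the paper's. Ordering each side by Lemma~\ref{lem: single_bad_vertex} and counting $d(u)$ positions of $uv$ against $d(u)-1$ neighbours is what the paper gets by applying Corollary~\ref{cor: unique_degree} to $G_u$ and to $G_v$, each extended by the pendant edge $uv$. The gap is the case $d$ odd, $i=j=(d+1)/2$, which you leave unproved. Re-running the greedy step at the last-processed neighbour $x$ need not be possible: all $d-1$ neighbours of $x$ other than $u$ are already processed, so $xu$ may have only one admissible position. Even if another position exists, nothing shows it unblocks a second position at $u$. The fallback fails on a count: with $u$ exceptional in all of $G$, moving $uv$ at $u$ gives $d$ candidate sequences against $d$ neighbour sequences, $f(v)$ included, so pigeonhole gives nothing. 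As written, the proof is incomplete.

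The missing observation is that $i\neq j$ is only sufficient for $f(u)\neq f(v)$; you should compare whole sequences. Take any good $i$ and $j$. If $f(u)\neq f(v)$ you are done. Otherwise, reverse $\prec_u$ and put $uv$ at position $d+1-i$ at $u$. Then:
\begin{enumerate}[(a)]
\item the new sequence at $u$ is exactly the reverse of the old $f(u)$;
\item it still differs from the (now reversed) sequences of its neighbours in $G_u$;
\item pairs inside $G_u-u$ stay distinguished;
\item nothing on the $G_v$ side changes.
\end{enumerate}
Since $w$ is proper and $d\ge 2$, the old $f(u)$ has pairwise distinct entries, so its reverse differs from it and hence from $f(v)$. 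This is precisely the paper's final step, which reverses the order on one side when $f_1(x)=f_2(y)$. It also makes your whole analysis of unique good positions unnecessary.
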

\begin{proof}
    Fix a proper edge colouring $w$ and a cut-edge $xy$. Let $G_1,G_2$ be the components of $G-xy$ containing $x,y$, respectively. If either component is a single vertex, exactly one of $x,y$ has degree one, since $G\neq K_2$, and Corollary \ref{cor: unique_degree} applies.

    Otherwise, let $H_i$ be a graph obtained from $G_i$, for $i=1,2$, by adjoining $xy$ and its other endpoint to it, together with the colouring of edges inherited from $G$. The added endpoint has degree one, and its neighbour has degree at least two. Therefore, by Corollary \ref{cor: unique_degree} there exists an order $\prec_i$ inducing a proper vertex colouring $f_i$ on each $H_i$.

    Since $E(H_1)\cap E(H_2)=\{xy\}$, the orders admit a common extension to $E(G)$. Every vertex of $G_i$ has all its incident edges in $H_i$, so this extension preserves $f_i$ on $V(G_i)$. Thus, only the sequences of the vertices $x$ and $y$ can possibly be equal. If $f_1(x)=f_2(y)$, we reverse $\prec_2$ before merging the orders. This reverses every sequence on $H_2$, preserving the fact that the sequences at adjacent vertices in that graph are distinct. It also changes the sequence at $y$, which consists of at least two pairwise distinct weights, while leaving the sequence at $x$ unchanged. Therefore, a common extension using the reversed order distinguishes $x$ and $y$ as well.
\end{proof}

\begin{lemma} Let $G$ be a connected graph with a cut-vertex. Then every proper edge colouring of $G$ is sequentially orderable. \label{lem: cut-vertex}
\end{lemma}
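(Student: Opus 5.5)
The plan is to reduce to the situation of Lemma~\ref{lem: single_bad_vertex} on two pieces glued at the cut-vertex, and then use the freedom in merging the two orders to fix the cut-vertex itself. Fix a proper edge colouring $w$ and a cut-vertex $c$. If $G$ has a cut-edge, then Lemma~\ref{lem: cut-edge} applies; $G\neq K_2$ holds because $K_2$ has no cut-vertex. So assume $G$ is bridgeless.

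Let $C_1$ be one component of $G-c$. Put $G_1=G[C_1\cup\{c\}]$ and $G_2=G-C_1$. Both are connected, they share only the vertex $c$, and $E(G_1)\cup E(G_2)=E(G)$ is a disjoint union. Let $a=d_{G_1}(c)$ and $b=d_{G_2}(c)$, so that $a+b=d(c)$. Since $G$ has no cut-edge, $a\ge 2$ and $b\ge 2$: if, say, $a=1$, the single edge from $c$ into $C_1$ would be a bridge.

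Apply Lemma~\ref{lem: single_bad_vertex} to $G_1$ and to $G_2$, each with exceptional vertex $c$. This gives orders $\prec_1,\prec_2$ such that, within each $G_i$, adjacent vertices other than $c$ receive distinct sequences. Any total order on $E(G)$ extending both $\prec_1$ and $\prec_2$ (an interleaving) keeps the sequence of every vertex $u\neq c$ unchanged, because all edges at $u$ lie in a single $G_i$. Hence every edge of $G$ not incident with $c$ is distinguished by such an order.

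It remains to choose the interleaving so that $f(c)$ differs from $f(u)$ for all $u\in N(c)$. The $a$ edges of $G_1$ at $c$ and the $b$ edges of $G_2$ at $c$ can be interleaved in any of $\binom{a+b}{a}$ patterns. Each pattern is realised by some global common extension: insert the edges of $\prec_2$ into $\prec_1$ block by block, placing each $c$-edge of $G_2$ in the prescribed gap between consecutive $c$-edges of $G_1$, and putting each non-$c$ edge of $G_2$ immediately before the next $c$-edge of $G_2$, or at the end. Because $w$ is proper at $c$, all weights at $c$ are distinct, so different patterns give different sequences $f(c)$. The neighbours' sequences are fixed regardless of the pattern, so at most $d(c)=a+b$ values are forbidden. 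Since $a,b\ge2$ we have
\[
\binom{a+b}{a}\ge\binom{a+b}{2}=\frac{(a+b)(a+b-1)}{2}>a+b ,
\]
because $a+b\ge4$. Thus a valid pattern exists.

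The main point needing care is this last step. We must check that every interleaving pattern at $c$ is realisable by a genuine total order extending both $\prec_1$ and $\prec_2$, and that distinct patterns really yield distinct sequences. The counting itself is routine once the bridgeless reduction guarantees $a,b\ge2$.
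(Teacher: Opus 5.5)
Your proposal is correct and follows essentially the same route as the paper. It splits at the cut-vertex into one component versus the rest and sends the case of a single edge into either side to Lemma~\ref{lem: cut-edge}. It then applies Lemma~\ref{lem: single_bad_vertex} to both pieces with the cut-vertex as the exceptional vertex, and picks one of the $\binom{a+b}{a}>a+b$ interleavings at $c$. The differences are cosmetic: you reduce to the bridgeless case globally rather than for the chosen split, and you spell out why every interleaving extends to a common total order, which the paper only asserts.
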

\begin{proof}
    Fix a proper edge colouring $w$ and a cut-vertex $v$. Let $G_1$ be a component of $G-v$ and $G_2$ the union of the remaining components of $G-v$. Put $k=|N(v)\cap V(G_1)|$ and $l=|N(v)\cap V(G_2)|$. If $k=1$ or $l=1$, the unique edge from $v$ to $G_1$ or $G_2$ is a cut-edge, so by Lemma \ref{lem: cut-edge} the result holds. Assume $k,l\geq 2$.

    Let $H_i=G[V(G_i)\cup\{v\}]$. Each $H_i$ is connected, so by Lemma \ref{lem: single_bad_vertex} there exists an order $\prec_i$ whose induced vertex colouring $f_i$ distinguishes every pair of adjacent vertices in $H_i-v$. Every vertex of $G_i$ has all its incident edges in $H_i$. Hence a common extension of $\prec_1$ and $\prec_2$ preserves its sequence, and only the sequence at $v$ depends on how the orders are merged.

    The restrictions of $\prec_1$ and $\prec_2$ to the edges at $v$ have $\binom{k+l}{k}$ interleavings. Each can be extended to a total order on $E(G)$ preserving both $\prec_1$ and $\prec_2$. Since the weights at $v$ are pairwise distinct, these interleavings yield distinct sequences at $v$. There are at most $d(v)=k+l$ neighbour sequences to avoid, whereas
    \[
        \binom{k+l}{k}\geq\binom{k+l}{2}>k+l
        \qquad\text{for }k,l\geq2.
    \]
    Choose an interleaving avoiding all these sequences and extend it to a total order of $E(G)$. The induced vertex colouring is then proper.
\end{proof}

\section{Distinct palettes}

For a proper edge colouring $w$, the \emph{palette} at a vertex $v$ is the set $S(v)=\{w(vu):u\in N(v)\}$. An edge $xy$ is \emph{safe} if $S(x)\neq S(y)$. Its endpoints then receive distinct sequences under every edge order.

For a connected graph $G$, Lemma \ref{lem: single_bad_vertex} shows that $w$ is sequentially orderable if all edges at some vertex are safe. The next theorem replaces this condition by the existence of a single safe edge.

\begin{theorem} Let $G$ be a connected graph with a proper edge colouring $w$ such that there exist two adjacent vertices with different palettes. Then $w$ is sequentially orderable. \label{thm: safe-edge}
\end{theorem}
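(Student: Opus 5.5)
We plan to adapt the greedy insertion from Lemma~\ref{lem: single_bad_vertex}. The obstruction there is the root: its sequence is fixed last and has no freedom left to avoid its neighbours. We will use the safe edge to absorb this obstruction. Fix adjacent vertices $x,y$ with $S(x)\neq S(y)$.

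First we reduce to the $2$-connected case. If $G$ has a cut-vertex, Lemma~\ref{lem: cut-vertex} already gives the result. A safe edge cannot exist in $K_2$, since both endpoints have palette $\{w(xy)\}$. So we may assume $G$ is $2$-connected with at least three vertices, and in particular $G-y$ is connected. Fix a spanning tree $T$ of $G-y$ rooted at $x$. List the vertices of $G-\{x,y\}$ so that every vertex precedes its parent in $T$.

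The construction runs in three steps.
\begin{enumerate}[(i)]
\item We begin the global order with a block consisting of all edges at $y$ except $xy$, in arbitrary order. Every edge inserted afterwards, including $xy$, is placed after this block. Consequently $xy$ is last among the edges at $y$, so $f(y)$ is determined from the start. We treat $y$ as an already processed vertex.
\item We process the vertices $u$ of $G-\{x,y\}$ in the chosen order, exactly as in Lemma~\ref{lem: single_bad_vertex}. The edge from $u$ to its parent in $T$ is still unordered, because the parent is processed later or is $x$. The edge $xy$ is never incident with $u$. We insert the other unordered edges at $u$ arbitrarily (after the $y$-block), leaving one edge $e$. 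The $d(u)$ positions of $e$ give $d(u)$ distinct sequences, since $w$ is proper. The sequences to avoid are those of the processed neighbours of $u$, possibly including $y$ with its already fixed sequence. These neighbours exclude the parent, so there are at most $d(u)-1$ of them, and a good position exists.
\item We process $x$ last. All edges at $x$ except $xy$ are now ordered, and all of them lie after the $y$-block. Inserting $xy$ anywhere after that block therefore realises each of the $d(x)$ relative positions at $x$, giving $d(x)$ distinct sequences, while keeping $xy$ last at $y$. The neighbours of $x$ other than $y$ number $d(x)-1$, so we can avoid all their sequences. The pair $x,y$ is distinguished automatically because $S(x)\neq S(y)$.
\end{enumerate}

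As in Lemma~\ref{lem: single_bad_vertex}, a processed vertex has all its incident edges ordered, so later insertions leave its sequence unchanged. Every edge of $G$ is then covered. An edge inside $G-\{x,y\}$ is handled when its later-processed endpoint is processed. An edge $uy$ is handled at $u$, because $f(y)$ was fixed beforehand. An edge $ux$ with $u\neq y$ is handled at $x$. The edge $xy$ is safe.

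The main obstacle is the root problem inherent in Lemma~\ref{lem: single_bad_vertex}. It is resolved by two choices. Fixing $f(y)$ in advance lets $y$ behave like an ordinary processed vertex. Reserving the single edge $xy$ gives $x$ a free edge at the end, with $d(x)$ admissible positions unconstrained by $y$. The points to double-check are that confining all insertions to after the $y$-block never limits the positional freedom at any $u$ or at $x$, and that $G-y$ is connected, which is where the reduction via Lemma~\ref{lem: cut-vertex} is needed.
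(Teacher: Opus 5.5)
\textbf{There is a genuine gap in step (ii), at the neighbours of $y$.} This is exactly the point you flagged for checking, and it does not hold.

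Take $u\in N(y)\setminus\{x\}$. The edge $uy$ lies in the initial $y$-block, and the free edge $e$ at $u$ must be inserted after that block. So $e$ can never precede $uy$, and only $d(u)-1$ positions are available, not $d(u)$. Meanwhile $y$ counts among the processed neighbours, so up to $d(u)-1$ sequences may need avoiding.

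For most such $u$ this is harmless. Here $f(u)$ begins with $w(uy)$, while $f(y)$ begins with the weight of the first block edge $yz$, so $y$ is avoided automatically when $u\neq z$. At $u=z$, however, the argument genuinely fails when $yz$ is unsafe, and no choice of safe edge or orientation rescues it.

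Take the cycle $v_1\dots v_6$ with $w(v_6v_1)=1$, $w(v_1v_2)=2$, $w(v_2v_3)=1$, $w(v_3v_4)=2$, $w(v_4v_5)=1$, $w(v_5v_6)=3$.
\begin{itemize}
\item The only safe edges are $v_4v_5$ and $v_6v_1$, and no vertex lies on both.
\item For each of the four choices of $(x,y)$, the graph $G-y$ is a path from $x$ to the other neighbour $z$ of $y$. So the tree, the processing order and the block are all forced, and $z$ is processed first.
\item The only free edge $e$ at $z$ must follow $yz$. Since $S(z)=S(y)$, we get $f(z)=(w(yz),w(e))=(w(yz),w(xy))=f(y)$.
\end{itemize}

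\textbf{The missing idea.} Edges frozen into a block that free edges cannot cross must be safe, so that losing a position is paid for by losing a constraint. The paper arranges this as follows.
\begin{itemize}
\item It takes $M$ to be the set of all edges between the vertices with palette $S(v)$ and the rest. Every edge of $M$ is safe.
\item In each component $Q$ of $G-M$, the edges of $M_Q$ other than the first one $e_Q$ form a final block.
\item A vertex $u$ of $Q$ then has $d_Q(u)$ positions and only has to avoid its processed neighbours in $Q$. Its neighbours across $M$ need not be considered.
\item $e_Q$ serves as the root's free edge, just as your $xy$ does for $x$.
\end{itemize}
Your idea of reserving a safe edge for the root is therefore the right one. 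The error is fixing $f(y)$ in advance when $y$ is joined to the remaining vertices by unsafe edges.
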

\begin{proof}
    Choose a vertex $v$, let $V_1$ consist of all vertices with palette $S(v)$, and put $V_2=V(G)\setminus V_1$. Both sets are nonempty, since some adjacent vertices have distinct palettes. Let $M$ be the set of edges between $V_1$ and $V_2$. Connectivity ensures that $M$ is nonempty, and every edge of $M$ is safe.

    Fix a total order $\prec_M$ on $M$. For each component $Q$ of $G-M$, let $M_Q$ be the set of edges of $M$ incident with $V(Q)$. We will order $E(Q)\cup M_Q$ so that adjacent vertices in $Q$ receive distinct sequences and the restriction to $M_Q$ agrees with $\prec_M$. All sequences considered in this construction include the weights on edges of $M_Q$, so they are the full sequences of vertices of $Q$ in $G$.

    Since $G$ is connected, $M_Q$ is nonempty. Let $e_Q$ be its first edge under $\prec_M$, and let $r$ be the endpoint of $e_Q$ in $Q$. Order the edges of $M_Q\setminus\{e_Q\}$ according to $\prec_M$ and reserve them as a final block. All subsequent insertions will be made before this block. If the block is empty, there is no restriction on the insertion positions.

    Root a spanning tree of $Q$ at $r$. As in Lemma \ref{lem: single_bad_vertex}, process all vertices other than $r$ before their parents. At a vertex $u\neq r$, the edge to its parent is still unordered. Insert all but one of the unordered edges of $Q$ incident with $u$ before the final block, leaving an edge $e$. There are $d_Q(u)$ positions for $e$ relative to the other incident edges of $Q$, all available before the block. These give distinct full sequences at $u$, since $w$ is proper and the incident weights from $M_Q$ form a fixed suffix. At most $d_Q(u)-1$ neighbours in $Q$ have been processed, so a position avoiding all their sequences exists. Neighbours outside $Q$ need not be considered, since the edges joining them to $u$ are safe. Later insertions leave the sequence at $u$ unchanged.

    Once all vertices other than $r$ have been processed, every edge of $Q$ is ordered. Insert $e_Q$ before the final block. Its $d_Q(r)+1$ positions relative to the edges of $Q$ incident with $r$ give distinct full sequences, while only $d_Q(r)$ neighbour sequences in $Q$ must be avoided. Thus a suitable position exists. This also covers the case when $Q$ is a single vertex. The resulting order distinguishes all adjacent vertices in $Q$ and agrees with $\prec_M$ on $M_Q$, because $e_Q$ precedes the final block.

    Finally, the orders constructed for the components admit a common extension together with $\prec_M$. Take such a common extension to $E(G)$. Every vertex of $Q$ has all its incident edges in $E(Q)\cup M_Q$, so its sequence is preserved. All edges outside $M$ therefore have distinct endpoint sequences, and the same holds for the safe edges of $M$.
\end{proof}

The following two statements are immediate consequences of Theorem \ref{thm: safe-edge}.

\begin{corollary} Let $G$ be a connected graph that is not a regular graph. Then every proper edge colouring of $G$ is sequentially orderable.
\end{corollary}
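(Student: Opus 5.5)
We plan to reduce the statement to Theorem \ref{thm: safe-edge} by exhibiting two adjacent vertices with different palettes. Fix a proper edge colouring $w$ of the connected non-regular graph $G$. Since $G$ is not regular, it has two vertices of different degrees. Connectivity lets us join them by a path. Along this path the degree must change somewhere, so there is an edge $xy$ with $d(x)\neq d(y)$.

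We then compare the palettes at $x$ and $y$. Because $w$ is proper, the edges at a vertex carry pairwise distinct weights, so $|S(x)|=d(x)$ and $|S(y)|=d(y)$. It follows that $|S(x)|\neq|S(y)|$, and hence $S(x)\neq S(y)$. Thus $xy$ is a safe edge, and Theorem \ref{thm: safe-edge} shows that $w$ is sequentially orderable.

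No step here is a real obstacle. The only point needing care is locating an \emph{adjacent} pair of vertices with different degrees rather than an arbitrary pair, and the path argument handles this. (Alternatively, Corollary \ref{cor: unique_degree} could be used in special cases, but going through Theorem \ref{thm: safe-edge} is uniform.) The case $G=K_2$ does not arise, since $K_2$ is regular.
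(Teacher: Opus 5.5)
Your proposal is correct and follows the paper's approach: the paper presents this as an immediate consequence of Theorem~\ref{thm: safe-edge}. Your argument supplies exactly the details it leaves implicit. Connectivity and non-regularity give an edge $xy$ with $d(x)\neq d(y)$, and properness then gives $|S(x)|=d(x)\neq d(y)=|S(y)|$, so $xy$ is a safe edge.
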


\begin{corollary} Let $G$ be a connected graph of class two. Then every proper edge colouring of $G$ is sequentially orderable.
\end{corollary}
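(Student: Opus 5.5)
The plan is to reduce to Theorem \ref{thm: safe-edge} by showing that a proper edge colouring of a connected class two graph always has two adjacent vertices with different palettes.

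First, treat the case where $G$ is not regular by the previous corollary, or directly. Since $G$ is connected and not regular, some edge $xy$ has $d(x)\neq d(y)$. Then $|S(x)|=d(x)\neq d(y)=|S(y)|$, because the colouring is proper. So $S(x)\neq S(y)$, and Theorem \ref{thm: safe-edge} applies. (The graph $K_2$ is regular and of class one, so no exception arises.)

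Now assume $G$ is $d$-regular, and suppose for contradiction that every edge has endpoints with equal palettes. By connectivity, every vertex then has the same palette $S$, with $|S|=d$. Every edge receives a colour from $S$, so relabelling the colours in $S$ as $1,\dots,d$ gives a proper $d$-edge-colouring of $G$. Hence $\chi'(G)\le d=\Delta(G)$, so $G$ is of class one, contradicting the hypothesis.

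Therefore some pair of adjacent vertices has distinct palettes, and Theorem \ref{thm: safe-edge} gives sequential orderability. The only point needing care is the propagation of equal palettes along edges to all vertices via connectivity. This is immediate, so I expect no real obstacle.
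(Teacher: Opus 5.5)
Your proof is correct and is essentially the paper's argument: the paper states this corollary as an immediate consequence of Theorem \ref{thm: safe-edge}, since equal palettes across every edge would, by connectivity, give all vertices a common palette $S$. That in turn yields a proper $|S|$-edge-colouring with $|S|=\Delta(G)$, forcing class one; your separate non-regular case is harmless but redundant, because a common palette already forces regularity.
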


\section{Regular graphs}

We first consider the two elementary obstructions. No edge weighting of $K_2$, the only connected 1-regular graph, is sequentially orderable, since its endpoints receive the same one-term sequence.

Likewise, no proper 2-edge-colouring of an even cycle is sequentially orderable. To see this, take the first edge in any total order and denote its weight by $a$. At each endpoint, the other incident edge has the second weight $b$ and occurs later in the order. Both endpoints therefore receive the sequence $(a,b)$.

Every proper edge colouring of a cycle using at least three colours has a safe edge: otherwise, all vertices would have the same two-element palette. Theorem \ref{thm: safe-edge} therefore applies to every such colouring, including every proper edge colouring of an odd cycle.

For regular graphs of degree at least six, we use a random edge order. The proof relies on the following symmetric form of the Local Lemma.

\begin{theorem}[The Local Lemma \cite{AlonSpencer}]
\label{LLL-symmetric}
Let $A_1,\dots,A_n$ be events in a common probability space, with $\probP(A_i)\leq p$ for every $i$.
Suppose each $A_i$ is independent of a family containing all but at most $D$ of the other events. If
\[
    ep(D+1)\leq1,
\]
then $\probP\bigl(\bigcap_{i=1}^n\overline{A_i}\bigr)>0$.
\end{theorem}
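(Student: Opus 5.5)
Since the Local Lemma is quoted from \cite{AlonSpencer}, we only outline the standard inductive proof. The case $D=0$ is direct: each $A_i$ is then independent of the family of all other events. So
\[
\probP\Bigl(\bigcap_i\overline{A_i}\Bigr)=\prod_i\bigl(1-\probP(A_i)\bigr)\geq(1-p)^n>0,
\]
since $p\leq 1/e<1$. Here we use that independence of an event from a family means independence from every event generated by that family, as in \cite{AlonSpencer}. Assume from now on that $D\geq1$.

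For each $i$, let $\Gamma(i)$ be the set of at most $D$ indices whose events are excluded from the family that $A_i$ is independent of. The core claim is as follows. For every $S\subseteq\{1,\dots,n\}$ with $i\notin S$ and $\probP\bigl(\bigcap_{j\in S}\overline{A_j}\bigr)>0$,
\[
\probP\Bigl(A_i\Bigm|\bigcap_{j\in S}\overline{A_j}\Bigr)\leq\frac{1}{D+1}.
\]
We prove this by induction on $|S|$. Split $S$ into $S_1=S\cap\Gamma(i)$ and $S_2=S\setminus S_1$. If $S_1=\emptyset$, independence gives the bound $p\leq 1/(e(D+1))\leq 1/(D+1)$.

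Otherwise we write the conditional probability as a quotient. The numerator is $\probP\bigl(A_i\cap\bigcap_{S_1}\overline{A_j}\mid\bigcap_{S_2}\overline{A_j}\bigr)$, which is at most $\probP\bigl(A_i\mid\bigcap_{S_2}\overline{A_j}\bigr)=\probP(A_i)\leq p$ by independence from the $S_2$ events. The denominator is $\probP\bigl(\bigcap_{S_1}\overline{A_j}\mid\bigcap_{S_2}\overline{A_j}\bigr)$. Label $S_1=\{j_1,\dots,j_r\}$ with $r\leq D$ and expand the denominator by the chain rule. Each factor is $1-\probP\bigl(A_{j_t}\mid\text{a strictly smaller intersection}\bigr)$, so by the induction hypothesis it is at least $1-\frac1{D+1}$. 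Hence the denominator is at least $\bigl(1-\frac1{D+1}\bigr)^D\geq 1/e$. Combining, the quotient is at most $ep\leq 1/(D+1)$, closing the induction.

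The conclusion then follows from the chain rule:
\[
\probP\Bigl(\bigcap_{i=1}^n\overline{A_i}\Bigr)=\prod_{i=1}^n\Bigl(1-\probP\Bigl(A_i\Bigm|\bigcap_{j<i}\overline{A_j}\Bigr)\Bigr)\geq\Bigl(1-\frac{1}{D+1}\Bigr)^n>0.
\]
The positivity of the conditioning events, which the claim needs, is established simultaneously by induction on the number of events. The one delicate point is that the claim must hold for all conditioning sets $S$ and not just initial segments, because the denominator estimate calls it on arbitrary subsets. The inequality $\bigl(1-\frac1{D+1}\bigr)^D\geq e^{-1}$ is the only calculation needed.
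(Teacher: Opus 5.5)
Your proposal is correct. The paper does not prove this statement but quotes it from Alon and Spencer, and your argument is the standard inductive proof given there: it is the general lemma with all $x_i=1/(D+1)$, written directly for the symmetric case, with the separate treatment of $D=0$ and the positivity of the conditioning events handled properly.
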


\begin{theorem} Let $G$ be a connected $d$-regular graph with $d\geq 6$. Then every proper edge colouring $w$ of $G$ is sequentially orderable.
\end{theorem}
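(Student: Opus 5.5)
By Theorem \ref{thm: safe-edge} we may assume that all adjacent vertices have equal palettes. Since $G$ is connected, every vertex then has the same palette, and as $G$ is $d$-regular this palette has exactly $d$ colours. The plan is to take a uniformly random edge order and apply Theorem \ref{LLL-symmetric}. Concretely, each edge $e$ receives an independent label $t_e$ uniform in $[0,1]$, and edges are ordered by their labels; ties occur with probability zero. For each edge $xy$ let $A_{xy}$ be the event $f(x)=f(y)$. Since all palettes coincide, $f(x)$ is simply the permutation of the $d$ colours read off in increasing label order at $x$.

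\emph{Bounding $p$.} Condition on the label $t$ of $xy$. The other $d-1$ edges at $x$ and the other $d-1$ edges at $y$ are $2(d-1)$ distinct edges with independent labels. Both vertices carry the colours other than $w(xy)$, each exactly once. For $f(x)=f(y)$ we need the following:
\begin{enumerate}[(i)]
\item the same number $k$ of these edges lies below $t$ at $x$ and at $y$;
\item the colour sets below $t$ agree, which happens with probability $1/\binom{d-1}{k}$ given (i);
\item the orders below and above $t$ match, which happens with probability $1/(k!(d-1-k)!)$.
\end{enumerate}
Integrating over $t$ with Beta integrals gives
\[
  \probP(A_{xy})=\frac{1}{(d-1)!}\sum_{k=0}^{d-1}\binom{d-1}{k}^2\frac{(2k)!\,(2d-2-2k)!}{(2d-1)!}.
\]
For $d=6$ this is about $0.37/120\approx 0.0031$. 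For larger $d$ the bound only improves: the sum is at most $1$, so $p\le 1/(d-1)!$.

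\emph{Bounding $D$.} The event $A_{xy}$ is determined by the labels of the $2d-1$ edges incident with $x$ or $y$. So it is mutually independent of all events $A_{uv}$ for which $uv$ shares no incident edge with $xy$, i.e.\ $\{u,v\}\cap(N(x)\cup N(y))=\emptyset$. The set $N(x)\cup N(y)$ has at most $2d$ vertices, including $x$ and $y$. At most $2d\cdot d$ edge-incidences meet it, and the $2d-1$ edges at $x$ or $y$ are counted twice. Hence at most $2d^2-2d+1$ edges $uv$ qualify, including $xy$ itself, and
\[
  D\le 2d^2-2d.
\]

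The condition $ep(D+1)\le1$ then reads $e\cdot 61\cdot 0.0031\approx 0.51\le 1$ for $d=6$. For $d\ge7$ it holds comfortably using $e(2d^2-2d+1)/(d-1)!$, which is already below $1$ at $d=7$ and decreases afterwards. The Local Lemma then yields an order avoiding every $A_{xy}$, so $w$ is sequentially orderable.

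The main obstacle is the case $d=6$. There the crude estimate $p\le 1/(d-1)!$ fails, since $e\cdot 61/120>1$. So the exact computation of $\probP(A_{xy})$, through the shared label of $xy$, and the careful count of $D$ are both genuinely needed.
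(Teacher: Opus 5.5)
Your proof is correct and follows the paper's argument: a uniformly random edge order, the symmetric Local Lemma with $D=2d(d-1)$, and the same exact value of the probability of $A_{xy}$ at $d=6$, giving $e\cdot 61\cdot p_6\approx 0.51$. The differences are only technical: you compute that probability by conditioning on the label of $xy$ and using Beta integrals, where the paper counts interleavings and uses the closed form $(d-1)!\,4^{d-1}/(2d-1)!$, and for $d\ge 7$ you use the cruder bound $p\le 1/(d-1)!$ instead of the paper's ratio argument.
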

\begin{proof}
    Assign independent random variables, each uniform on $(0,1)$, to the edges and order the edges by increasing values. Almost surely there are no ties, and the resulting total order $\prec$ is uniform. For each edge $uv$, let $A_{uv}$ be the event that $f_{\prec}(u)=f_{\prec}(v)$. We will bound the probabilities of these events and the number of possible dependencies between them.

    If $S(u)\neq S(v)$, then $\probP(A_{uv})=0$. Suppose now that $S(u)=S(v)$. The sets of edges incident with $u$ and $v$ share only $uv$, so their union has $2d-1$ edges, whose relative orders are equally likely.

    To count the orders for which $A_{uv}$ occurs, fix the position $j+1$ of $w(uv)$ in the common sequence, where $0\leq j\leq d-1$. There are $(d-1)!$ orders of the remaining weights. Each determines the incident edge order at both endpoints. The two lists of $j$ edges preceding $uv$ can be interleaved in $\binom{2j}{j}$ ways, and the two lists following it in $\binom{2d-2-2j}{d-1-j}$ ways. Hence
    \[
        \probP(A_{uv})
        =\frac{(d-1)!}{(2d-1)!}
        \sum_{j=0}^{d-1}\binom{2j}{j}\binom{2d-2-2j}{d-1-j}
        =\frac{(d-1)!\,4^{d-1}}{(2d-1)!}
        =:p_d.
    \]
    The sum is evaluated using identity (3.90) in \cite{Gould}. Thus $\probP(A_{uv})\leq p_d$ for every edge $uv$, with equality when the endpoint palettes coincide.

    Next, $A_{uv}$ depends only on the variables assigned to edges incident with $u$ or $v$, so this event is independent of the family of events $A_{xy}$ with $x,y\notin N(u)\cup N(v)$.
    To bound the number of remaining events, first count the $2(d-1)$ edges other than $uv$ incident with $u$ or $v$. Each of the at most $2(d-1)$ other neighbours of $u$ and $v$ is incident with at most $d-1$ further edges. Thus we may take
    \[
        D=2(d-1)+2(d-1)^2=2d(d-1).
    \]

    Finally, put $q_d=e p_d(2d(d-1)+1)$. The Local Lemma requires $q_d\leq1$. Since $p_6=32/10395$, we have
    \[
        q_6=\frac{1952e}{10395}<1.
    \]
    Moreover,
    \[
        \frac{q_{d+1}}{q_d}
        =\frac{2}{2d+1}\,
        \frac{2d(d+1)+1}{2d(d-1)+1}<1
        \qquad\text{for }d\geq6.
    \]
    Hence $q_d<1$ for every $d\geq6$. By the Local Lemma, with positive probability none of the events $A_{uv}$ occurs, so the induced vertex colouring is proper.
\end{proof}

\section{Concluding remarks}

The only unsolved cases of proper $d$-edge-colourings of 2-connected $d$-regular graphs are $d\in\{3,4,5\}$. We conjecture that these colourings are also sequentially orderable, leaving colourings of $K_2$ and proper 2-edge-colourings of even cycles as the only exceptions. In view of the results above, this is equivalent to the following statement.

\begin{conjecture}
Let $G$ be a connected graph other than $K_2$ or an even cycle. Then every proper edge colouring of $G$ is sequentially orderable.
\end{conjecture}

\section*{Funding}
This research was supported by the AGH University of Krakow under grant no. 16.16.420.054, funded by the Polish Ministry of Science and Higher Education.

\end{document}